\documentclass{amsart}
\usepackage[english]{babel}
\usepackage{amssymb}
\usepackage{hyperref}   
\usepackage{mathtools}
\mathtoolsset{showonlyrefs}


\theoremstyle{plain}
\newtheorem{definition}{Definition}
\theoremstyle{plain}
\newtheorem{lemma}{Lemma}
\theoremstyle{remark}
\newtheorem{remark}{Remark}
\theoremstyle{plain}
\newtheorem{theorem}{Theorem}

\begin{document}

\title{A note on Minkowski formula of conformal Killing-Yano 2-form}

\author{Xiaoxiang Chai}
\address{Korea Institute for Advanced Study, Seoul 02455, South Korea}
\email{xxchai@kias.re.kr}

\begin{abstract}
  We study the Minkowski formula of conformal Killing-Yano two-forms in a
  spacetime of constant curvature. We establish the spacetime Alexandrov
  theorem with a free boundary.
\end{abstract}

{\maketitle}

\section{Introduction}

The Minkowski formula states that for a smooth closed hypersurface $X : \Sigma
\to \mathbb{R}^n$,
\begin{equation}
  (n - k) \int_{\Sigma} \sigma_{k - 1} \mathrm{d} \mu = k \int_{\Sigma}
  \sigma_k \langle X, \nu \rangle .
\end{equation}
Here $\sigma_k$ is the $k$-th elementary symmetric functions of principal
curvatures of $\Sigma$. It has found itself many applications in Riemannian
geometry for example a proof of the celebrated Alexandrov theorem which says
that an closed embedded hypersurface of constant mean curvature must be an
sphere. The same ideas of proof lead to a free boundary generalization due to
Wang-Xia {\cite{wang-uniqueness-2019}} establishing the rigidity of spherical
caps in balls of space forms. Both closed and the free boundary settings made
use of a specially chosen conformal Killing vector field. Tachibana introduced
the conformal Killing-Yano two-form as a generalization of conformal Killing
vector field.

\begin{definition}[Tachibana {\cite{tachibana-conformal-1969}}]
  \label{cky}A two-form $Q$ on an $(n + 1)$-dimensional spacetime is called a
  conformal Killing-Yano 2-form if for every vector field $X, Y$ and $Z$ the
  following identity holds
  \begin{equation}
    (\nabla_X Q) (Y, Z) + (\nabla_Y Q) (X, Z) = [2 \langle X, Y \rangle
    \langle \xi, Z \rangle - \langle X, Z \rangle \langle \xi, Y \rangle -
    \langle Y, Z \rangle \langle \xi, X \rangle] \label{conformal killing
    yano}
  \end{equation}
  where $\xi = \frac{1}{n} \ensuremath{\operatorname{div}}Q$. We call $\xi$
  the associated 1-form of $Q$.
\end{definition}

In physics literature, these two forms are usually termed as {\itshape{hidden
symmetry}} and can give information about the spacetime. See for example
{\cite{jezierski-conformal-2006}} and the references therein. Besides its
physical significance, mathematically the conformal Killing-Yano two-forms are
also interesting. In particular, they also allow a Minkowski type formula.
Chen, Wang, Yau {\cite{chen-minkowski-2019}} expressed quasilocal masses using
this Minkowski formula. The authors of {\cite{wang-minkowski-2017}}
established a spacetime version of the Alexandrov theorem for codimension two
spacelike hypersurfaces via the Minkowski formula.

In this work, we are going to extend results in {\cite{wang-minkowski-2017}}
where they used only conformal Killing-Yano two-forms $r \mathrm{d} r \wedge
\mathrm{d} t$. First we state the spacetime CMC condition with free boundary.

\begin{definition}
  We say that $\Sigma^2$ in a spacetime $\mathbb{R}^{3, 1}$ is CMC with free
  boundary if $\Sigma$ admits a null normal vector field $L$ with $\langle
  \vec{H}, L \rangle$ is constant, $(D L)^{\bot} = 0$ and $\Sigma$ meets the
  de Sitter sphere $\mathbf{S}^{2, 1}$ orthogonally.
\end{definition}

Of course, one can allow arbitrary spacetime and boundary in the above
definition. One interesting problem related to such surfaces is the uniqueness
problem of a topological disk (cf. {\cite{fraser-uniqueness-2015}}). Without
the free boundary condition, similar questions can be asked for two-spheres in
$3 + 1$ dimensional de Sitter sphere (cf. {\cite{chern-proofs-1969}}) . One
can also ask whether a spacelike graph over $\mathbb{R}^2$ in $\mathbb{R}^{3,
1}$ with $\langle \vec{H}, L \rangle = 0$ and $(D L)^{\bot} = 0$ is linear
which is analogous to the Bernstein problem for minimal graphs.

We generalize the spacetime Alexandrov theorem to the free boundary settings
via establishing a spacetime Heintz-Karcher inequality Theorem \ref{spacetime
heintz-karcher}. We state here the theorem in the Minkowski spacetime.

\begin{theorem}
  \label{alexandrov}Let $\Sigma$ be a codimension two, future incoming null
  embedded submanifold in the $(3 + 1)$-dimensional Minkowski spacetime with
  free boundary on the de Sitter sphere $\mathbf{S}^{2, 1}$. If $\Sigma$ lies
  in a half spacetime, and there exists a null vector field $\underline{L}$
  such that along $\Sigma$ that $\langle \vec{H}, \underline{L} \rangle$ is a
  positive constant and $(D \underline{L})^{\bot} = 0$. Then $\Sigma$ lies in
  a shear free null hypersurface.
\end{theorem}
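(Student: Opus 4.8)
The plan is to run the classical Alexandrov argument in its free‑boundary form (as in Wang--Xia \cite{wang-uniqueness-2019}), with the Minkowski formula replaced by its conformal Killing--Yano version and the role of the Heintze--Karcher inequality played by the spacetime Heintze--Karcher inequality of Theorem~\ref{spacetime heintz-karcher}. First I would fix the geometric picture: since $\Sigma$ is future incoming null, it generates an incoming null hypersurface $N$ by exponentiating the null normal $\underline L$, and the hypothesis $(D\underline L)^{\perp}=0$ says precisely that the generators of $N$ are null geodesics along which $\underline L$ is parallel, so $N$ stays regular until its generators focus. The spacetime region bounded by $\Sigma$, the null hypersurface $N$, its focal locus, and a portion of the de Sitter sphere $\mathbf{S}^{2,1}$ plays the role of the mean‑convex body in the Riemannian proof, and the orthogonality of $\Sigma$ and $\mathbf{S}^{2,1}$ is what prevents the null flow from leaving the admissible region, exactly as the free‑boundary condition does in \cite{wang-uniqueness-2019}.

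\emph{Minkowski formula.} The conformal Killing--Yano two‑forms on $\mathbb{R}^{3,1}$ form a finite‑dimensional space; beyond the form $r\,\mathrm{d}r\wedge\mathrm{d}t$ used in \cite{wang-minkowski-2017} it contains ``rotational'' forms adapted to $\mathbf{S}^{2,1}$, and I would choose $Q$ so that its normal contraction along $\partial\Sigma$ is tangent to $\mathbf{S}^{2,1}$ --- the analogue of the conformal Killing field tangent to the boundary sphere in \cite{wang-uniqueness-2019}. Contracting the defining identity \eqref{conformal killing yano} of $Q$ with the second fundamental form of $\Sigma$ in the $\underline L$ direction and integrating over $\Sigma$, Stokes' theorem produces a bulk term of the shape $\int_\Sigma \langle\vec H,\underline L\rangle\,u\,\mathrm{d}\mu$ with $u$ a support function built from $Q$, a term carrying the associated one‑form $\xi=\tfrac1n\operatorname{div}Q$, and a boundary integral over $\partial\Sigma$. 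Because $Q$'s normal contraction is tangent to $\mathbf{S}^{2,1}$, $\Sigma$ meets $\mathbf{S}^{2,1}$ orthogonally, and $\mathbf{S}^{2,1}$ is totally umbilic in $\mathbb{R}^{3,1}$, the boundary integral drops out, leaving a clean free‑boundary Minkowski identity.

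\emph{Combination and rigidity.} Inserting the hypothesis $\langle\vec H,\underline L\rangle\equiv c>0$ into the free‑boundary Minkowski identity expresses $\int_\Sigma u\,\mathrm{d}\mu$ (up to explicit constants) in terms of the ``focal/boundary'' data, and this is exactly the quantity that the spacetime Heintze--Karcher inequality of Theorem~\ref{spacetime heintz-karcher} controls from the other side; the constant $c$ together with the half‑spacetime hypothesis keeps the relevant support function positive so that the inequality applies. Chaining the Minkowski identity with Theorem~\ref{spacetime heintz-karcher} forces equality there, and the equality case says that the null second fundamental form of $\Sigma$ in the complementary null direction is pure trace, i.e. $\Sigma$ is shear free. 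Since the Weyl curvature of $\mathbb{R}^{3,1}$ vanishes, the Sachs propagation equation then keeps the shear zero along the generators of $N$, so $\Sigma$ lies in the shear free null hypersurface $N$, which is the assertion.

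\emph{Main obstacle.} I expect the crux to be the Minkowski step: choosing the conformal Killing--Yano two‑form that is simultaneously compatible with the incoming‑null structure (so that contracting with the $\underline L$‑second fundamental form produces precisely the term $\langle\vec H,\underline L\rangle\,u$ together with the $\xi$‑term) and with the de Sitter boundary (so that the $\partial\Sigma$ integral cancels), and then verifying the cancellation using only orthogonality and the umbilicity of $\mathbf{S}^{2,1}$ --- the computation in \cite{wang-minkowski-2017} never had to confront a boundary at all. A secondary technical point is to check that the null geodesic flow defining the enclosed region stays regular up to $\partial\Sigma$ and that $\mathbf{S}^{2,1}$ genuinely serves as a barrier, which is where the ``lies in a half spacetime'' hypothesis does real work.
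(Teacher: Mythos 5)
Your proposal follows essentially the same route as the paper: the paper derives the free-boundary Minkowski formula (Theorem \ref{minkowski formula}) for a conformal Killing--Yano two-form chosen to have no component normal to $\mathbf{S}^{2,1}$ so the boundary term vanishes, observes that the CMC hypothesis $\langle\vec H,\underline L\rangle=c>0$ together with $(D\underline L)^{\bot}=0$ turns that identity into equality in the spacetime Heintze--Karcher inequality of Theorem \ref{spacetime heintz-karcher}, and concludes from the rigidity case exactly as you do. The only cosmetic difference is that the paper obtains the Minkowski formula by computing the divergence of the tangential one-form $Q(\partial_a,\underline L)\,\mathrm{d}u^a$ (the term $\underline{\chi}_{ab}Q^{ab}$ dying by antisymmetry) rather than by contracting the defining identity with the second fundamental form, but this is the same computation in different clothing.
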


The theorem is a direct corrollary from Theorem \ref{spacetime heintz-karcher}
and similar proofs as in {\cite[Theorem 3.14]{wang-minkowski-2017}}. The
article is organized as follows:

In Section \ref{preliminaries}, we collect basics of spacetime of constant
curvature and the conformal Killing-Yano two-forms they admit. In Section
\ref{s:alexandrov}, we prove a spacetime Heintz-Karcher inequality with a free
boundary leading to a free boundary, spacetime Alexandrov theorem. We mention
briefly the generalization to higher order curvatures.

{\bfseries{Acknowledgements}} I would like to thank Xia Chao, Wang Ye-kai for
their interest and advice in an earlier version of this work. I would also
like to acknowledge the support of Korea Institute for Advanced Study under
the research number MG074401.

\section{Conformal Killing-Yano 2-form on spacetime of constant
curvature}\label{preliminaries}

A spacetime of dimension $3 + 1$ can only admit 20 conformal Killing-Yano
two-forms. Actually, if a spacetime admits all twenty of them, then the
spacetime has to be a spacetime of constant curvature. Note that for similar
statements are also true for conformal Killing vector fields. In Minkowski, de
Sitter and anti-de Sitter spacetime, these two forms are found explicitly. See
the works by Jezierski and Lukasik
{\cite{jezierski-conformal-2006,jezierski-conformal-2008}}. Now we collect
some basics of these spacetimes and the conformal Killing-Yano two forms that
live on them.

\subsection{Minkowski spacetime}

Let $(x^0, x^1, x^2, x^3)$ be the standard coordinates of the Minkowski space
$\mathbb{R}^{3, 1}$, define
\begin{align}
  \mathcal{D} & = - x^0 \mathrm{d} x^0 + x^1 \mathrm{d} x^1 + x^2 \mathrm{d}
  x^2 + x^3 \mathrm{d} x^3, \\
  \mathcal{T}_0 & = - \mathrm{d} x^0, \\
  \mathcal{T}_i & = \mathrm{d} x^i, \\
  \mathcal{L}_{0 i} & = - x^0 \mathrm{d} x^i + x^i \mathrm{d} x^0 . 
\end{align}
The conformal Killing-Yano 2-forms on Minkowski spacetime $\mathbb{R}^{3, 1}$
are
\begin{equation}
  \mathcal{T}_{\mu} \wedge \mathcal{T}_{\nu}, \mathcal{D} \wedge
  \mathcal{T}_{\mu}, \ast (\mathcal{D} \wedge \mathcal{T}_{\mu}) \text{ and }
  \mathcal{D} \wedge \mathcal{L}_{\mu \nu} + \frac{1}{2} \langle \mathcal{D},
  \mathcal{D} \rangle \mathcal{T}_{\mu} \wedge \mathcal{T}_{\nu},
  \label{minkowski killing 2-form}
\end{equation}
where $\ast$ is the Hodge star operator and $\mu, \nu$ range from 0 to 3. See
{\cite{jezierski-conformal-2006}} for a calculation. Note that all are still
conformal Killing-Yano 2-forms on $\mathbb{R}^{n, 1}$ except $\ast
(\mathcal{D} \wedge \mathcal{T}_{\mu})$.

We remark that the last one in {\eqref{minkowski killing 2-form}} can be used
to prove formulas relating the center of mass (See
{\cite{miao-evaluation-2016}}) and a Brown-York type quasi-local quantity by
following similar procedures in {\cite{chen-minkowski-2019}}.

\subsection{Anti-de Sitter spacetime}\label{anti de sitter basics}

We recall some basics of four-dimensional anti-de Sitter spacetime. The
anti-de Sitter spacetime ad$\mathbf{S}^{3, 1}$ is defined to be the set in
$\mathbb{R}^{3, 2}$
\begin{equation}
  - (y^0)^2 + (y^1)^2 + (y^2)^2 + (y^3)^2 - (y^4)^2 = - 1
\end{equation}
with metric induced from $\eta = - (\mathrm{d} y^0)^2 + (\mathrm{d} y^1)^2 +
(\mathrm{d} y^2)^2 + (\mathrm{d} y^3)^2 - (\mathrm{d} y^4)^2$. We will use
coordinates of the Poincar{\'e} ball model by setting $r = \sqrt{(x^1)^2 +
(x^2)^2 + (x^3)^2}$, $y^0 = \tfrac{1 + r^2}{1 - r^2} \cos t$, $y^4 = \tfrac{1
+ r^2}{1 - r^2} \sin t$ and $y^i = \tfrac{2 x^i}{1 - r^2}$. The metric of
ad$\mathbf{S}^{3, 1}$ is then $- (\tfrac{1 + r^2}{1 - r^2})^2 \mathrm{d} t^2 +
\tfrac{4 \sum_i (\mathrm{d} x^i)^2}{(1 - r^2)^2}$.

It is shown in {\cite{jezierski-conformal-2008}} that the conformal
Killing-Yano 2-forms in four-dimensional anti-de Sitter spacetime are
\begin{equation}
  \mathrm{d} y^0 \wedge \mathrm{d} y^i, \mathrm{d} y^0 \wedge \mathrm{d} y^4,
  \mathrm{d} y^i \wedge \mathrm{d} y^4, \mathrm{d} y^i \wedge \mathrm{d} y^j
\end{equation}
and their Hodge duals with respect to the anti-de Sitter metric. We fix the
frame $\theta^i = \tfrac{2}{1 - r^2} \mathrm{d} x^i$ and $\theta^0 = \tfrac{1
+ r^2}{1 - r^2} \mathrm{d} t$. Let $\omega = \tfrac{2}{1 - r^2} \mathrm{d} r$,
then the length of $\omega$ is one. We have
\begin{equation}
  \mathrm{d} y^i = \theta^i + y^i r \omega, \mathrm{d} y^4 = \cos t \theta^0 +
  \tfrac{2 r}{1 - r^2} \omega \sin t.
\end{equation}
Note that $y^4$ and $y^i$ are static potentials, that is $\nabla_i \mathrm{d}
y^{\mu} = y^{\mu} \theta^i$ and $\nabla_0 \mathrm{d} y^{\mu} = - y^{\mu}
\theta^0$ for each $\mu = 0, 1, \ldots, 4$. Here $\nabla_{\mu}$ denotes
covariant derivative with respect to the vector field
$(\theta^{\mu})^{\sharp}$. Then it is easy to obtain that
\begin{equation}
  \ensuremath{\operatorname{div}} (\mathrm{d} y^i \wedge \mathrm{d} y^4) = 3
  (y^i \mathrm{d} y^4 - y^4 \mathrm{d} y^i) .
\end{equation}
Note that $y^i \mathrm{d} y^4 - y^4 \mathrm{d} y^i$ is a Killing 1-form. Using
the properties of Hodge operators, we find that
$\ensuremath{\operatorname{div}} (\ast (\mathrm{d} y^2 \wedge \mathrm{d}
y^3))$ vanishes.

We remark that the 2-form $\mathrm{d} y^i \wedge \mathrm{d} y^4$ can be used
similarly as in {\cite{chen-conserved-2015}} to recover a formula relating the
integrals of Ricci tensor and Brown-York type mass vector of an asymptotically
hyperbolic manifold. These formulas are overlooked by the authors of
{\cite{chen-conserved-2015}}. The original proof is due to
{\cite{miao-quasi-local-2017}}.

\subsection{de Sitter spacetime}

The case with de Sitter spacetime is similar to the anti-de Sitter case (See \
{\cite{jezierski-conformal-2008}}). We consider here the $3 + 1$ dimensional
case i.e. $\mathbf{S}^{3, 1}$. The de Sitter spacetime is the subset
\begin{equation}
  y_0^2 + y_1^2 + y_2^2 + y_3^2 - y_4^2 = 1
\end{equation}
in $\mathbb{R}^{4, 1}$ with the metric inherited from the standard Lorentz
metric of $\mathbb{R}^{4, 1}$. We use the coordinate change
\begin{align}
  y^0 & = \tfrac{1 - r^2}{1 + r^2} \cosh t, \\
  y^i & = \tfrac{2 x^i}{1 + r^2} \\
  y^4 & = \tfrac{1 - r^2}{1 + r^2} \sinh t, 
\end{align}
where $r = \sqrt{\sum_{i = 1}^3 (x^i)^2} < 1$. Now the metric of the de Sitter
spacetime $\mathbf{S}^{3, 1}$ takes the form
\begin{equation}
  \eta = - (\tfrac{1 - r^2}{1 + r^2})^2 \mathrm{d} t^2 + \tfrac{4}{(1 +
  r^2)^2} [(\mathrm{d} x^1)^2 + (\mathrm{d} x^2)^2 + (\mathrm{d} x^3)^2] .
\end{equation}
It is shown in {\cite{jezierski-conformal-2008}} that the conformal
Killing-Yano 2-forms in four-dimensional de Sitter spacetime are
\begin{equation}
  \mathrm{d} y^0 \wedge \mathrm{d} y^i, \mathrm{d} y^0 \wedge \mathrm{d} y^4,
  \mathrm{d} y^i \wedge \mathrm{d} y^4, \mathrm{d} y^i \wedge \mathrm{d} y^j
\end{equation}
and their Hodge duals with respect to the de Sitter metric. We fix the frame
$\theta^i = \tfrac{2}{1 + r^2} \mathrm{d} x^i$ and $\theta^0 = \tfrac{1 -
r^2}{1 + r^2} \mathrm{d} t$. Let $\omega = \tfrac{2}{1 + r^2} \mathrm{d} r$,
then the length of $\omega$ is one. We have
\begin{equation}
  \mathrm{d} y^i = \theta^i - y^i r \omega, \mathrm{d} y^4 = \cosh t \theta^0
  - \tfrac{2 r}{1 + r^2} \omega \sinh t.
\end{equation}
Note that $y^4$ and $y^i$ are static potentials, that is $\nabla_i \mathrm{d}
y^{\mu} = - y^{\mu} \theta^i$ and $\nabla_0 \mathrm{d} y^{\mu} = y^{\mu}
\theta^0$ for each $\mu = 0, 1, \ldots, 4$. Here $\nabla_{\mu}$ denotes
covariant derivative with respect to the vector field
$(\theta^{\mu})^{\sharp}$. Then it is easy to obtain that
\begin{equation}
  \ensuremath{\operatorname{div}} (\mathrm{d} y^i \wedge \mathrm{d} y^4) = - 3
  (y^i \mathrm{d} y^4 - y^4 \mathrm{d} y^i) . \label{div <mathd>s}
\end{equation}
Note that $y^i \mathrm{d} y^4 - y^4 \mathrm{d} y^i$ is a Killing 1-form. We
found also easily that $\ensuremath{\operatorname{div}} (\ast (\mathrm{d} y^2
\wedge \mathrm{d} y^3))$ vanishes.

\section{Spacetime Alexandrov theorem with free boundary}\label{s:alexandrov}

We start by proving a Minkowski formula for a codimension two spacelike
hypersurface in $\mathbb{R}^{3, 1}$ with boundary meeting orthogonally with
the de Sitter sphere. The result is related to mean curvature only, the
generalization to higher order curvatures is quite straightforward.

The Minkowski spacetime is used as a prototype. First, we fix a conformal
Killing-Yano 2-form in Minkowski spacetime $\mathbb{R}^{3, 1}$
\begin{equation}
  Q =\mathcal{D} \wedge \mathcal{L}_{0 i} + \frac{1}{2} [1 + \langle
  \mathcal{D}, \mathcal{D} \rangle] e^0 \wedge e^i . \label{cyk minkowski}
\end{equation}
The associated 1-form is $\xi : = \tfrac{1}{n}
\ensuremath{\operatorname{div}}Q =\mathcal{L}_{0 i}$ since

\begin{lemma}
  \label{divergence of minkowski 2-form}The divergence of the 2-form $Q
  =\mathcal{D} \wedge \mathcal{L}_{0 i} + \tfrac{1}{2} \langle \mathcal{D},
  \mathcal{D} \rangle \mathrm{d} x^0 \wedge \mathrm{d} x^i$ is given by
  $\ensuremath{\operatorname{div}}Q = 3\mathcal{L}_{0 i}$.
\end{lemma}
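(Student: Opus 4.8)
The plan is to split $Q$ into the decomposable piece $\mathcal{D}\wedge\mathcal{L}_{0i}$ and the scalar-multiple piece $\tfrac12\langle\mathcal{D},\mathcal{D}\rangle\,\mathrm{d}x^0\wedge\mathrm{d}x^i$, and to handle each by exploiting two structural properties of the Cartesian $1$-forms involved. First I would record: (a) $\mathcal{D}$ is metrically dual to the position vector field $X=x^\mu\partial_\mu$, so in the flat Cartesian connection $\nabla\mathcal{D}=\eta$; hence $\operatorname{div}\mathcal{D}=n+1=4$, the gradient of $f:=\tfrac12\langle\mathcal{D},\mathcal{D}\rangle$ is $\mathcal{D}^\sharp=X$ (the sign flip in raising the $x^0$ index is exactly what makes $\mathrm{d}f=\mathcal{D}$), and $\nabla_V\mathcal{D}=V^\flat$ for every vector $V$; (b) $\mathcal{L}_{0i}$ is a Killing $1$-form (dual to a boost generator), so $\nabla\mathcal{L}_{0i}$ is antisymmetric and in particular $\operatorname{div}\mathcal{L}_{0i}=0$, while a one-line Euler-homogeneity computation in Cartesian coordinates gives $\nabla_X\mathcal{L}_{0i}=\mathcal{L}_{0i}$ because the components of $\mathcal{L}_{0i}$ are homogeneous of degree one in $x$.

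Next I would invoke the Leibniz rule for the divergence of a decomposable $2$-form, $\operatorname{div}(\alpha\wedge\beta)=(\operatorname{div}\alpha)\,\beta-(\operatorname{div}\beta)\,\alpha+\nabla_{\alpha^\sharp}\beta-\nabla_{\beta^\sharp}\alpha$, with $\alpha=\mathcal{D}$ and $\beta=\mathcal{L}_{0i}$. By (b) the term $(\operatorname{div}\beta)\,\alpha$ drops, and the last two terms telescope: $\nabla_{\mathcal{D}^\sharp}\mathcal{L}_{0i}=\mathcal{L}_{0i}$ by (b), while $\nabla_{\mathcal{L}_{0i}^\sharp}\mathcal{D}=(\mathcal{L}_{0i}^\sharp)^\flat=\mathcal{L}_{0i}$ by (a), so $\operatorname{div}(\mathcal{D}\wedge\mathcal{L}_{0i})=4\mathcal{L}_{0i}+\mathcal{L}_{0i}-\mathcal{L}_{0i}=4\mathcal{L}_{0i}$. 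For the second piece, since $\mathrm{d}x^0\wedge\mathrm{d}x^i$ is parallel one has $\operatorname{div}(f\,\mathrm{d}x^0\wedge\mathrm{d}x^i)=\iota_{\nabla f}(\mathrm{d}x^0\wedge\mathrm{d}x^i)=\iota_X(\mathrm{d}x^0\wedge\mathrm{d}x^i)=x^0\,\mathrm{d}x^i-x^i\,\mathrm{d}x^0=-\mathcal{L}_{0i}$. Adding the two contributions yields $\operatorname{div}Q=4\mathcal{L}_{0i}-\mathcal{L}_{0i}=3\mathcal{L}_{0i}$; in particular $\xi=\tfrac1n\operatorname{div}Q=\mathcal{L}_{0i}$, confirming the remark preceding the lemma.

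There is no genuine obstacle here---the statement is essentially a bookkeeping exercise---so the real work is getting the signs right: the conventions for $\alpha\wedge\beta$ and for $\operatorname{div}$ (which index is contracted), and the Lorentzian index-raising that flips the $x^0$ component and is responsible for the precise coefficient $\tfrac12$ in $Q$. As a safeguard I would also verify $\operatorname{div}Q=3\mathcal{L}_{0i}$ by a direct component computation of $\nabla^\mu Q_{\mu\nu}$ in the coordinates $(x^0,x^1,x^2,x^3)$, which is short since all Christoffel symbols vanish.
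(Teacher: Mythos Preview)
The paper states this lemma without proof, so there is nothing to compare against; your argument is a clean, correct verification. Your two structural observations---$\nabla\mathcal{D}=\eta$ (whence $\operatorname{div}\mathcal{D}=4$ and $\nabla_V\mathcal{D}=V^\flat$) and the Killing/homogeneity properties of $\mathcal{L}_{0i}$---together with the Leibniz identity $\operatorname{div}(\alpha\wedge\beta)=(\operatorname{div}\alpha)\beta-(\operatorname{div}\beta)\alpha+\nabla_{\alpha^\sharp}\beta-\nabla_{\beta^\sharp}\alpha$ give exactly $\operatorname{div}(\mathcal{D}\wedge\mathcal{L}_{0i})=4\mathcal{L}_{0i}$, and the contraction $\iota_X(\mathrm{d}x^0\wedge\mathrm{d}x^i)=x^0\,\mathrm{d}x^i-x^i\,\mathrm{d}x^0=-\mathcal{L}_{0i}$ handles the second piece. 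The signs are consistent with the paper's conventions $\mathcal{D}=-x^0\mathrm{d}x^0+x^j\mathrm{d}x^j$ and $\mathcal{L}_{0i}=-x^0\mathrm{d}x^i+x^i\mathrm{d}x^0$, and your safeguard of a direct component check of $\nabla^\mu Q_{\mu\nu}$ is a sensible sanity test.
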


Define the {\itshape{Minkowski unit ball}}
\begin{equation}
  \mathcal{B}^{3, 1} = \{x \in \mathbb{R}^{3, 1} : \langle x, x \rangle
  \leqslant 1\} .
\end{equation}
The boundary of $\mathcal{B}^{3, 1}$ is the de Sitter sphere $\mathbf{S}^{2,
1}$. It is easy to check that $\mathcal{D}^{\sharp} \lrcorner Q$ is zero along
$\partial \mathcal{B}^{3, 1}$, so $Q$ has no components normal to
$\mathbf{S}^{2, 1}$.

\begin{theorem}
  \label{minkowski formula}Let $\Sigma$ be an immersed oriented spacelike
  codimension two submanifolds of the Minkowski spacetime $\mathbb{R}^{3, 1}$,
  $\partial \Sigma$ lies in the de Sitter sphere $\mathbf{S}^{2, 1}$ and
  $\Sigma$ meets $\mathbf{S}^{2, 1}$ orthogonally. For any null vector field
  $\underline{L}$ of $\Sigma$, we have
  \begin{equation}
    \int_{\Sigma} [(n - 1) \langle \xi, \underline{L} \rangle + Q (\vec{H},
    \underline{L}) + Q (\partial_a, (D^a \underline{L})^{\bot})] \mathrm{d}
    \mu = 0.
  \end{equation}
\end{theorem}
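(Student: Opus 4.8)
The plan is to integrate the conformal Killing–Yano identity \eqref{conformal killing yano} over $\Sigma$ after contracting it appropriately with the tangent frame of $\Sigma$ and the null normal $\underline{L}$, and then to identify the resulting divergence term on $\Sigma$ as a total divergence whose integral reduces, by Stokes' theorem, to a boundary term that vanishes because of the orthogonality condition and the fact that $Q$ has no normal component along $\mathbf{S}^{2,1}$. Concretely, I would fix a local oriented orthonormal frame $\{\partial_a\}_{a=1,2}$ tangent to $\Sigma$, together with the future incoming null normal and a second null normal $\underline{L}$ normalized suitably; write $\vec H = -\sum_a (\mathrm{II})(\partial_a,\partial_a)$ in terms of the second fundamental form valued in the normal bundle; and consider the $1$-form $\alpha$ on $\Sigma$ defined by $\alpha(\partial_a) = Q(\partial_a, \underline{L})$, restricted to $\Sigma$. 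The computation of $\operatorname{div}_\Sigma \alpha$ will produce three kinds of terms: one coming from differentiating $Q$ (to which \eqref{conformal killing yano} applies, with $\xi = \mathcal{L}_{0i}$ by Lemma \ref{divergence of minkowski 2-form}), one coming from the second fundamental form of $\Sigma$ hitting the $\partial_a$ slot (giving the $Q(\vec H,\underline{L})$ term), and one coming from the normal connection acting on $\underline{L}$ (giving the $Q(\partial_a,(D^a\underline{L})^\bot)$ term).

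The first step is therefore the pointwise identity: for tangent vectors $\partial_a$ and the normal field $\underline{L}$,
\begin{equation}
  \operatorname{div}_\Sigma \big( Q(\cdot,\underline{L}) \big) = \sum_a (\nabla_{\partial_a} Q)(\partial_a,\underline{L}) + Q(\vec H,\underline{L}) + Q\big(\partial_a, (D^a \underline{L})^\bot\big),
\end{equation}
where the splitting of $D_{\partial_a}\partial_a$ into tangential and normal parts is what separates the mean-curvature term, and the compatibility of $D$ with the metric is used to move the connection onto $\underline{L}$. The second step is to feed \eqref{conformal killing yano} into $\sum_a (\nabla_{\partial_a}Q)(\partial_a,\underline{L})$: setting $X=Y=\partial_a$ in \eqref{conformal killing yano} gives $2(\nabla_{\partial_a}Q)(\partial_a,\underline{L}) = 2\langle\partial_a,\partial_a\rangle\langle\xi,\underline{L}\rangle - 2\langle\partial_a,\underline{L}\rangle\langle\xi,\partial_a\rangle$, and since $\langle\partial_a,\partial_a\rangle = 1$ while $\langle\partial_a,\underline{L}\rangle = 0$ ($\underline{L}$ being normal to $\Sigma$), summing over the two tangential directions yields exactly $\sum_a(\nabla_{\partial_a}Q)(\partial_a,\underline{L}) = 2\langle\xi,\underline{L}\rangle = (n-1)\langle\xi,\underline{L}\rangle$ with $n=3$. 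Thus the integrand in the theorem is precisely $\operatorname{div}_\Sigma(Q(\cdot,\underline{L}))$.

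The third and final step is to apply the divergence theorem on $\Sigma$: $\int_\Sigma \operatorname{div}_\Sigma(Q(\cdot,\underline{L}))\,\mathrm{d}\mu = \int_{\partial\Sigma} Q(\mu,\underline{L})\,\mathrm{d}s$, where $\mu$ is the outward unit conormal to $\partial\Sigma$ in $\Sigma$. Here the orthogonality hypothesis enters: since $\Sigma$ meets $\mathbf{S}^{2,1}$ orthogonally, the conormal $\mu$ is, along $\partial\Sigma$, normal to the de Sitter sphere, i.e. proportional to $\mathcal{D}^\sharp$; and the remark preceding the theorem states that $\mathcal{D}^\sharp \lrcorner Q = 0$ along $\partial\mathcal{B}^{3,1}$, so $Q(\mu,\underline{L}) = 0$ there and the boundary integral vanishes. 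I expect the main obstacle to be purely bookkeeping: getting the normalization and sign conventions for the null frame $\{\underline{L}, \vec H\}$ and the induced orientation on $\partial\Sigma$ consistent, and verifying carefully that "meets orthogonally" does force $\mu \parallel \mathcal{D}^\sharp$ in the Lorentzian setting (the conormal lives in the normal bundle of $\partial\Sigma$ in $\mathbb{R}^{3,1}$, which is a Lorentzian $2$-plane, so one must check it is the spacelike direction pointing out of $\mathcal{B}^{3,1}$). None of these steps requires more than the structure equations of a codimension-two submanifold plus Definition \ref{cky}; the essential input is the algebraic collapse in step two, which is what makes the coefficient $n-1$ appear.
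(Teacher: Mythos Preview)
Your approach is essentially identical to the paper's: define the $1$-form $\mathcal{Q}=Q(\partial_a,\underline{L})\,\mathrm{d}u^a$ on $\Sigma$, compute its intrinsic divergence, apply the conformal Killing--Yano identity to the $(D_{\partial_a}Q)(\partial_a,\underline{L})$ term, and integrate by parts using that $Q$ has no component normal to $\mathbf{S}^{2,1}$ so the boundary term vanishes under the orthogonality hypothesis.

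There is one small omission in your pointwise identity: when you differentiate $Q(\partial_a,\underline{L})$, the term $Q(\partial_a, D^a\underline{L})$ has a \emph{tangential} piece $Q(\partial_a,(D^a\underline{L})^\top)=\underline{\chi}_{ab}\,Q^{ab}$ in addition to the normal piece you kept. The paper records this term explicitly and then drops it because $\underline{\chi}_{ab}=\langle D_a\underline{L},\partial_b\rangle=-\langle\underline{L},D_a\partial_b\rangle$ is symmetric (coordinate frame) while $Q^{ab}$ is antisymmetric, so the contraction vanishes. You should make this cancellation explicit; without it, your displayed identity is not yet justified. Otherwise the argument is complete and matches the paper.
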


\begin{proof}
  Define $\mathcal{Q}= Q (\partial_a, \underline{L}) \mathrm{d} u^a$ on
  $\Sigma$ and the proof is almost the same as Theorem 2.2 of
  {\cite{wang-minkowski-2017}}. We include their proof for convenience. Let
  $\underline{\chi} = \langle D_a \underline{L}, \partial_b \rangle$. Consider
  the 1-form $\mathcal{Q}= Q (\partial_a, \underline{L}) \mathrm{d} u^a$, we
  have
  \begin{align}
    \ensuremath{\operatorname{div}}\mathcal{Q} & = \nabla_a \mathcal{Q}^a - Q
    (\nabla_a \partial_a, \underline{L}) \\
    & = (D^a Q) (\partial_a, \underline{L}) + Q (\vec{H}, \underline{L}) + Q
    (\partial_a, D^a \underline{L}) \\
    & = (n - 1) \langle \xi, \underline{L} \rangle + Q (\vec{H},
    \underline{L}) + \underline{\chi}_{a b} Q^{a b} + Q (\partial_a, (D^a
    \underline{L})^{\bot}) \\
    & = (n - 1) \langle \xi, \underline{L} \rangle + Q (\vec{H},
    \underline{L}) + Q (\partial_a, (D^a \underline{L})^{\bot}) . 
\end{align}
  Integration by parts and noting that $Q$ has no components normal to the de
  Sitter sphere.
\end{proof}

\subsection{A monotonicity formula}

Let $\Sigma$ be a spacelike submanifold of codimension two in a spacetime
$(\mathcal{S}^{3, 1}, g)$ which admits a Killing-Yano two form $Q$. Here,
$\mathcal{S}$ is either one of the four dimensional Minkowski, de Sitter and
anti de Sitter spacetime. We require that $Q$ has no normal component normal
to a support hypersurface $S$. Suppose that $\langle \vec{H}, \underline{L}
\rangle \neq 0$, we define the following functional
\begin{equation}
  \mathcal{F} (\Sigma, [\underline{L}]) = (n - 1) \int_{\Sigma} \frac{\langle
  \xi, \underline{L} \rangle}{\langle \vec{H}, \underline{L} \rangle}
  \mathrm{d} \mu - \frac{1}{2} \int_{\Sigma} Q (L, \underline{L}) \mathrm{d}
  \mu . \label{functional}
\end{equation}
Note $\mathcal{F}$ is invariant under the change $L \to a L$ and
$\underline{L} \to \frac{1}{a} \underline{L}$.

Let $\chi$ and $\underline{\chi}$ be respectively the second fundamental form
with respect to $L$, $\underline{L}$; let $\underline{C}_0$ denote the future
incoming null hypersurface of $\Sigma$.

$\underline{C}_0$ is obtained by taking the collection of all null geodesics
emanating from $\Sigma$ with initial velocity $\underline{L}$. We then extend
it to a future directed null vector field along $\underline{C}_0$. Consider
the evolution of $\Sigma$ along $\underline{C}_0$ by a family of immersions $F
: \Sigma \times [0, T) \to \underline{C}_0$ satisfying
\begin{equation}
  \left\{\begin{array}{l}
    \frac{\partial F}{\partial s} (x, s) = \varphi (x, s) \underline{L},\\
    F (x, 0) = F_0 (x),\\
    \Sigma \bot S
  \end{array}\right.
\end{equation}
for some positive function $\varphi (x, s)$.

We have the following monotonicity property of the flow $\varphi$.

\begin{theorem}
  Suppose that $\langle \vec{H}, \underline{L} \rangle > 0$ for some null
  vector field $\underline{L}$. Then $\mathcal{F} (F (\Sigma, s),
  [\underline{L}])$ is monotone decreasing under the flow.
\end{theorem}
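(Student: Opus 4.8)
\emph{Proof sketch.} The plan is to differentiate $\mathcal{F}(F(\Sigma,s),[\underline{L}])$ in $s$ and to reduce the derivative to a single boundary-free integral over the evolving surface $\Sigma_s=F(\Sigma,s)$, from which a sign can be read off. Extend $\underline{L}$ along the incoming null hypersurface $\underline{C}_0$ as its geodesic generator, $D_{\underline{L}}\underline{L}=0$, so that the flow parameter $s$ and the affine parameter $\tau$ of the generators are related by $\partial\tau/\partial s=\varphi$; normalize the conjugate null normal by $\langle L,\underline{L}\rangle=-1$, write $h=\langle\vec{H},\underline{L}\rangle$, keep the notation $\underline{\chi}_{ab}=\langle D_{\partial_a}\underline{L},\partial_b\rangle$ and $\mathcal{Q}_a=Q(\partial_a,\underline{L})$ from the proof of Theorem \ref{minkowski formula}, and set $\zeta_a=\langle D_{\partial_a}\underline{L},L\rangle$. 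The three evolution identities I would establish first are: the first variation of area, $\partial_s(\mathrm{d}\mu)=\varphi\operatorname{tr}\underline{\chi}\,\mathrm{d}\mu=-\varphi h\,\mathrm{d}\mu$; the null Raychaudhuri equation, which in a spacetime of constant curvature (hence Einstein, with $\operatorname{Ric}(\underline{L},\underline{L})=0$ since $\underline{L}$ is null) reads $\partial_s(\operatorname{tr}\underline{\chi})=-\varphi|\underline{\chi}|^2$; and $\partial_s\langle\xi,\underline{L}\rangle=\varphi\langle D_{\underline{L}}\xi,\underline{L}\rangle=0$, which holds because the associated one-form $\xi$ is Killing in each relevant case (for instance $\xi=\mathcal{L}_{0i}$ in $\mathbb{R}^{3,1}$, or a constant multiple of $y^i\,\mathrm{d}y^4-y^4\,\mathrm{d}y^i$ in the (anti-)de Sitter cases), so that $\langle D_{\underline{L}}\xi,\underline{L}\rangle=0$.

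Next I would differentiate $Q(L,\underline{L})$. Since $D_{\varphi\underline{L}}\underline{L}=0$, and since $\langle L,L\rangle=0$ with $\langle L,\underline{L}\rangle$ constant forces $\tfrac{D}{\mathrm{d}s}L$ to be tangent to $\Sigma_s$, a short computation of the evolution of the induced frame gives $\tfrac{D}{\mathrm{d}s}L=(\nabla^a\varphi-\varphi\zeta^a)\partial_a$. Combined with Definition \ref{cky} applied to $X=L$, $Y=Z=\underline{L}$, which yields $(\nabla_{\underline{L}}Q)(L,\underline{L})=-\langle\xi,\underline{L}\rangle$ (the term $(\nabla_L Q)(\underline{L},\underline{L})$ vanishing since $\nabla_L Q$ is a two-form), this gives $\partial_s\big(Q(L,\underline{L})\big)=-\varphi\langle\xi,\underline{L}\rangle+(\nabla^a\varphi-\varphi\zeta^a)\mathcal{Q}_a$. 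Assembling the pieces, the only awkward term is $\int_{\Sigma_s}(\nabla^a\varphi)\mathcal{Q}_a\,\mathrm{d}\mu$, which I would integrate by parts on $\Sigma_s$: the boundary contribution $\int_{\partial\Sigma_s}\varphi\,\mathcal{Q}(\nu)$ vanishes because $\Sigma_s$ meets the support hypersurface $S$ orthogonally and $Q$ has no component normal to $S$, so $\mathcal{Q}$ annihilates the outward conormal $\nu$ of $\partial\Sigma_s$, while the interior term is rewritten with the divergence identity $\operatorname{div}\mathcal{Q}=(n-1)\langle\xi,\underline{L}\rangle+Q(\vec{H},\underline{L})+Q(\partial_a,(D^a\underline{L})^{\bot})$ obtained inside the proof of Theorem \ref{minkowski formula}. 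After this substitution the terms carrying $Q(L,\underline{L})$ cancel against one another, the terms carrying $\zeta^a\mathcal{Q}_a$ cancel against one another, and one is left with
\[
\frac{\mathrm{d}}{\mathrm{d}s}\,\mathcal{F}\big(F(\Sigma,s),[\underline{L}]\big)=-(n-1)\int_{\Sigma_s}\frac{\varphi\,\langle\xi,\underline{L}\rangle\,|\underline{\chi}|^2}{\langle\vec{H},\underline{L}\rangle^{2}}\,\mathrm{d}\mu-\frac{n-2}{2}\int_{\Sigma_s}\varphi\,\langle\xi,\underline{L}\rangle\,\mathrm{d}\mu .
\]

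Since $n\ge 2$ and $\varphi>0$, both integrands on the right are nonnegative multiples of $\langle\xi,\underline{L}\rangle$, so the asserted monotonicity follows once $\langle\xi,\underline{L}\rangle\ge0$; the Cauchy--Schwarz inequality $|\underline{\chi}|^2\ge\tfrac{1}{n-1}(\operatorname{tr}\underline{\chi})^2=\tfrac{1}{n-1}\langle\vec{H},\underline{L}\rangle^2$ is then what will upgrade this to the sharp spacetime Heintze--Karcher inequality and pin down its equality case, where $\underline{\chi}$ is pure trace. I expect the genuinely delicate points to be two. First, legitimizing the integration by parts and the slicewise use of Theorem \ref{minkowski formula}: one must verify that the flow really preserves the free boundary condition $\partial\Sigma_s\subset S$, $\Sigma_s\perp S$ for all $s\in[0,T)$, so that no boundary term survives and the divergence identity is available at each time. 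Second, establishing the sign $\langle\xi,\underline{L}\rangle\ge0$; this is exactly where the geometric hypotheses of Theorem \ref{alexandrov} — $\Sigma$ future incoming null, contained in a half-spacetime, with free boundary on the de Sitter sphere — must be used. A secondary bookkeeping matter is keeping the sign conventions for the first variation of area, for $\langle\vec{H},\underline{L}\rangle=-\operatorname{tr}\underline{\chi}$, and for the null-frame normalization consistent with those used in the proof of Theorem \ref{minkowski formula}.
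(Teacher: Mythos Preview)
Your sketch follows the same route the paper defers to (Theorem~3.2 of \cite{wang-minkowski-2017}): differentiate $\mathcal{F}$ along the null flow, feed in Raychaudhuri and the conformal Killing--Yano identity, integrate the $(\nabla^a\varphi)\mathcal{Q}_a$ term by parts, and substitute the divergence computation from the proof of Theorem~\ref{minkowski formula}; the paper's only addition is exactly the point you isolate, namely that the boundary term vanishes because $Q$ has no component normal to the support hypersurface $S$. Your caveat about the sign condition $\langle\xi,\underline{L}\rangle\ge 0$ is correct and worth flagging: in \cite{wang-minkowski-2017} one has $Q=r\,\mathrm{d}r\wedge\mathrm{d}t$, so $\xi$ is proportional to $-\partial_t$ and future-directedness of $\underline{L}$ delivers the sign automatically, whereas here $\xi=\mathcal{L}_{0i}$ and the sign genuinely requires the half-spacetime hypothesis that enters only downstream in Theorems~\ref{spacetime heintz-karcher} and~\ref{alexandrov}.
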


\begin{proof}
  See Theorem 3.2 of {\cite{wang-minkowski-2017}}. We only have to use the
  extra fact that $Q$ has no components normal to the de Sitter space as in
  the proof of Theorem \ref{minkowski formula}.
\end{proof}

The monotonicity property leads to a spacetime Heintz-Karcher inequality.
More, specifically, if under certain flow $\varphi$, the surface $\Sigma$ with
$\langle \vec{H}, \underline{L} \rangle > 0$ flows into a submanifold of the
time slice $\{x^0 = 0\}$ at $s = T$ and for $\Sigma$
\begin{equation}
  \mathcal{F} (\Sigma, [L]) \geqslant 0
\end{equation}
holds provided $\varphi (\Sigma, T) \subset \{x^0 = 0\}$ and $\mathcal{F}
(\varphi (\Sigma, T), [L]) \geqslant 0$.

\begin{lemma}
  For any $\Sigma \subset \{x^0 = 0\}$, $\mathcal{F} (\Sigma, [L]) \geqslant
  0$ reduces to
  \begin{equation}
    (n - 1) \int_{\Sigma} \tfrac{x^i}{H} \mathrm{d} \mu \geqslant
    \int_{\Sigma} \langle X_{\partial_i}, \nu \rangle \mathrm{d} \mu .
    \label{space heintz-karcher}
  \end{equation}
  
\end{lemma}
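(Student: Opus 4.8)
The strategy is to specialise the functional \eqref{functional} to a submanifold lying in the totally geodesic time slice $\{x^0=0\}$ and to read off its Euclidean content. Identify $\{x^0=0\}$ with $\mathbb{R}^3$, let $\nu$ be the outward unit normal of $\Sigma$ inside this slice and let $e_0=\partial_{x^0}$ be the future directed unit timelike normal of the slice; as null normal frame take $L=e_0+\nu$ and $\underline{L}=e_0-\nu$, which represents the classes entering $\mathcal{F}$ since $\langle L,\underline{L}\rangle=-2$. Because the slice is totally geodesic in $\mathbb{R}^{3,1}$, the mean curvature vector of $\Sigma$ in the spacetime is the image of the Euclidean mean curvature vector of $\Sigma\subset\mathbb{R}^3$, so with the matching orientation $\langle\vec{H},L\rangle=H$, the scalar mean curvature.

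Next I would evaluate the two terms of $\mathcal{F}(\Sigma,[L])$ on the slice. The associated $1$-form $\xi=\mathcal{L}_{0i}=-x^0\,\mathrm{d}x^i+x^i\,\mathrm{d}x^0$ restricts to $x^i\,\mathrm{d}x^0$ on $\{x^0=0\}$, whence $\langle\xi,L\rangle=x^i$, so the first term is $(n-1)\int_\Sigma \tfrac{x^i}{H}\,\mathrm{d}\mu$. For the second term, restrict the $2$-form $Q$ of \eqref{cyk minkowski} to the slice, where $\mathcal{D}=\sum_j x^j\,\mathrm{d}x^j$ and $\langle\mathcal{D},\mathcal{D}\rangle=|x|^2$, and evaluate on $L,\underline{L}$ using $(\alpha\wedge\beta)(L,\underline{L})=\alpha(L)\beta(\underline{L})-\alpha(\underline{L})\beta(L)$ together with $\mathrm{d}x^j(e_0)=0=\mathrm{d}x^0(\nu)$. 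This gives
\[
  Q(L,\underline{L})=2x^i\langle x,\nu\rangle-(1+|x|^2)\nu^i=2\langle X_{\partial_i},\nu\rangle,
\]
where $X_{\partial_i}=x^i x-\tfrac12(1+|x|^2)\partial_i$ is the conformal Killing vector field on $\mathbb{R}^3$ tangent to the unit sphere $\{|x|=1\}$. Hence $-\tfrac12\int_\Sigma Q(L,\underline{L})\,\mathrm{d}\mu=-\int_\Sigma\langle X_{\partial_i},\nu\rangle\,\mathrm{d}\mu$.

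Combining the two computations, $\mathcal{F}(\Sigma,[L])=(n-1)\int_\Sigma \tfrac{x^i}{H}\,\mathrm{d}\mu-\int_\Sigma\langle X_{\partial_i},\nu\rangle\,\mathrm{d}\mu$, so that $\mathcal{F}(\Sigma,[L])\geqslant0$ is exactly \eqref{space heintz-karcher}. The only delicate point is bookkeeping: one must fix the sign conventions for $\vec{H}$ and for the null normals consistently with the definition of $\mathcal{F}$ so that the mean curvature term appears as $\tfrac{x^i}{H}$ with the correct sign, and one should note that orthogonality of $\Sigma$ with $\mathbf{S}^{2,1}$ forces $\partial\Sigma$ into the unit sphere of the slice, which is what makes the sphere tangent field $X_{\partial_i}$ the relevant one. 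I do not expect any genuine difficulty beyond these normalisations; the substantive point is the pointwise identity $Q(L,\underline{L})=2\langle X_{\partial_i},\nu\rangle$ on $\{x^0=0\}$.
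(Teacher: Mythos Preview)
Your proposal is correct and follows essentially the same route as the paper: choose the null frame $L=e_0+\nu$, $\underline{L}=e_0-\nu$ on the totally geodesic slice, read off $\langle\xi,\underline{L}\rangle=x^i$ and $\langle\vec{H},\underline{L}\rangle=H$, and compute $Q(L,\underline{L})$ directly to obtain $2\langle X_{\partial_i},\nu\rangle$. Your explicit evaluation $Q(L,\underline{L})=2x^i\langle x,\nu\rangle-(1+|x|^2)\nu^i$ in fact yields $X_{\partial_i}=x^i x-\tfrac12(1+|x|^2)\partial_i$, which is the conformal Killing field tangent to the unit sphere used by Wang--Xia; the paper's displayed formula $X_a=\langle X,a\rangle X+\tfrac12(|X|^2+1)a$ carries the opposite sign on the second term, so your bookkeeping caveat is well placed (and your sign is the one consistent with the free boundary condition).
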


\begin{proof}
  We have $\underline{L} = \partial_t - e_n$ where $e_n$ is a unit normal. So
  $\langle \vec{H}, \underline{L} \rangle = H$ where $H$ is the mean curvature
  of $\Sigma$ in $\mathbf{B}^n$. We have that $\xi =\mathcal{L}_{0 i} = x^i
  \mathrm{d} x^0$, so $\langle \xi, \underline{L} \rangle = x^i$. Also,
  \begin{equation}
    Q (L, \underline{L}) = Q (\partial_t + \nu, \partial_t - \nu) = 2 \langle
    X_{\partial_i}, \nu \rangle,
  \end{equation}
  where $X_a = \langle X, a \rangle X + \tfrac{1}{2} (|X|^2 + 1) a$ where $a =
  a^i \partial_i$ is a constant vector in $\mathbb{R}^n$. It easily leads to
  {\eqref{space heintz-karcher}}.
\end{proof}

Note that this is precisely an inequality proven already by Wang-Xia
{\cite[(5.5)]{wang-uniqueness-2019}} with the assumption that $\Sigma$ has
positive mean curvature and lies in a half ball.

Combining with their result, we have

\begin{theorem}[spacetime Heintz-Karcher inequality]
  \label{spacetime heintz-karcher} If there exists a flow $\varphi$ of a
  hypersurface $\Sigma$ with $\langle \vec{H}, \underline{L} \rangle > 0$ for
  some null vector field $\underline{L}$ and a free boundary on
  $\mathbf{S}^{2, 1}$ which flows $\Sigma$ into the half unit ball of the
  slice $\{x^0 = 0\}$, then we have the inequality
  \begin{equation}
    \int_{\Sigma} \frac{\langle \xi, \underline{L} \rangle}{\langle \vec{H},
    \underline{L} \rangle} \mathrm{d} \mu \geqslant \frac{1}{2 (n - 1)}
    \int_{\Sigma} Q (L, \underline{L}) \mathrm{d} \mu .
  \end{equation}
  Equality occurs if and only if $\Sigma$ lies in a shear free null
  hypersurface with free boundary on $\mathbf{S}^{n - 1, 1}$.
\end{theorem}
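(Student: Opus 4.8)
The plan is to derive the spacetime Heintz--Karcher inequality by combining the monotonicity of $\mathcal{F}$ along the null flow $\varphi$ with the already-established Euclidean inequality \eqref{space heintz-karcher} (which, as remarked, is exactly \cite[(5.5)]{wang-uniqueness-2019}). Concretely, I would first observe that since $\langle \vec H, \underline L\rangle>0$ and the flow $\varphi$ carries $\Sigma$ into the half unit ball of the slice $\{x^0=0\}$ at time $s=T$, the monotone decreasing property of $\mathcal{F}(F(\Sigma,s),[\underline L])$ gives
\begin{equation}
  \mathcal{F}(\Sigma,[\underline L]) \geqslant \mathcal{F}(\varphi(\Sigma,T),[\underline L]).
\end{equation}
The right-hand side is evaluated on a hypersurface sitting inside the totally geodesic slice $\{x^0=0\}$, where by the preceding Lemma the condition $\mathcal{F}(\varphi(\Sigma,T),[L])\geqslant 0$ is equivalent to the Euclidean free-boundary Heintz--Karcher inequality \eqref{space heintz-karcher}; by Wang--Xia this holds whenever the limiting hypersurface has positive mean curvature and lies in a half ball, which is guaranteed by the hypotheses of the theorem. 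Hence $\mathcal{F}(\varphi(\Sigma,T),[\underline L])\geqslant 0$, and therefore $\mathcal{F}(\Sigma,[\underline L])\geqslant 0$, which is precisely the stated inequality after dividing by $n-1$ and rearranging the two integrals in \eqref{functional}.

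For the equality case, I would argue in two stages. If equality holds in the stated inequality then $\mathcal{F}(\Sigma,[\underline L])=0$, and by monotonicity $\mathcal{F}(F(\Sigma,s),[\underline L])$ is constant in $s$; tracing back through the proof of the monotonicity theorem (Theorem 3.2 of \cite{wang-minkowski-2017}), the derivative $\tfrac{d}{ds}\mathcal{F}$ is a sum of manifestly nonpositive terms — the key one being proportional to an integral of $|\mathring{\underline\chi}|^2/\langle\vec H,\underline L\rangle$ with $\mathring{\underline\chi}$ the trace-free part of the null second form $\underline\chi$ — so each must vanish identically along the flow. Vanishing of $\mathring{\underline\chi}$ on every slice $F(\Sigma,s)$ says exactly that the null hypersurface $\underline C_0$ swept out by the flow is shear free. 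The free boundary condition is preserved along the flow by construction (the third line of the flow system, $\Sigma\perp S$), so $\Sigma$ lies in a shear free null hypersurface meeting $\mathbf{S}^{n-1,1}$ orthogonally. Conversely, if $\Sigma$ already lies in a shear free null hypersurface with free boundary, then one can choose $\underline L$ tangent to the rulings so that $\mathring{\underline\chi}\equiv0$ along the whole flow, forcing $\tfrac{d}{ds}\mathcal{F}=0$ and hence equality.

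The main obstacle I anticipate is not the monotonicity step — that is quoted wholesale from \cite{wang-minkowski-2017} plus the observation that $Q$ has no normal component on the de Sitter sphere — but rather making the limiting argument at $s=T$ fully rigorous: one must know that the flow actually exists up to a finite time $T$ at which $\varphi(\Sigma,T)$ lies in $\{x^0=0\}$, that the limit is a sufficiently regular (embedded, positive-mean-curvature) hypersurface for the Wang--Xia inequality to apply, and that the free boundary condition passes to the limit. In the closed case of \cite{wang-minkowski-2017} this is handled by an explicit description of the incoming null hypersurface; here the presence of the boundary on $\mathbf{S}^{2,1}$ and the orthogonality condition require checking that the reflection/foliation structure near the boundary behaves well, which is where the free-boundary input of Wang--Xia is essential. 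I would treat the existence and regularity of the limiting configuration as an input justified exactly as in \cite[Section 3]{wang-minkowski-2017}, and devote the bulk of the written proof to the identification of the equality case via the vanishing of the shear.
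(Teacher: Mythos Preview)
Your proposal is correct and follows essentially the same strategy as the paper: monotonicity of $\mathcal{F}$ along the null flow combined with the Wang--Xia inequality on the terminal slice $\{x^0=0\}$ yields the inequality, and the equality case is read off from constancy of $\mathcal{F}$. The paper's treatment of the equality case differs only cosmetically---it invokes the Wang--Xia rigidity at $s=T$ to identify $\Sigma_T$ as a spherical cap and then observes that the null hypersurface it generates is shear free, whereas you extract $\mathring{\underline\chi}\equiv 0$ directly from the vanishing of $\tfrac{d}{ds}\mathcal{F}$ via \cite[Theorem~3.2]{wang-minkowski-2017}---but the two arguments are equivalent.
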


\begin{proof}
  Let $\Sigma_t = \varphi_t (\Sigma)$, then for each $t > 0$, the equality
  holds. Suppose that $\Sigma_T \subset \{x^0 = 0\}$ for some $T > 0$. So
  $\Sigma_T$ has to be a spherical cap orthogonal to the unit sphere in
  $\mathbb{R}^n$ according to {\cite{wang-uniqueness-2019}}. In particular,
  under the flow $\varphi$, $\Sigma_t$ foliates a shear free null hypersurface
  $S$ with free boundary.
\end{proof}

\subsection{Anti-de Sitter case}\label{anti de sitter case}

Theorems \ref{minkowski formula}, \ref{spacetime heintz-karcher} and
\ref{alexandrov} work as well in the case with $\partial \Sigma = \emptyset$.
The same proof also adapts in the anti-de Sitter and de Sitter settings. We
use the notations in Section \ref{anti de sitter basics}. For simplicity, we
set $i$ to be 1, we use the 2-forms $\mathrm{d} y^1 \wedge \mathrm{d} y^4$ and
$\ast (\mathrm{d} y^2 \wedge \mathrm{d} y^3)$ only. Note that the Hodge star
operator commutes with the covariant derivative. Using this, we see easily
that $\ensuremath{\operatorname{div}} (\ast (\mathrm{d} y^2 \wedge \mathrm{d}
y^3))$ vanishes. We use the 2-form

\begin{flushleft}
  \[ Q = \mathrm{d} y^1 \wedge \mathrm{d} y^4 + l \ast (\mathrm{d} y^2 \wedge
     \mathrm{d} y^3) \label{a<mathd>s killing-yano 2-form} \]
\end{flushleft}

where $l > 0$ is a positive constant. We define the surface $\mathcal{B}^{3,
1}$ to be the surface with distance less than $d$ from the point $t = 0$, $r =
0$ where $\cosh d = l$. If $Y_1, Y_2 \in \ensuremath{\operatorname{ad}}
\mathbf{S}^{3, 1}$ (using the embedding into $\mathbb{R}^{3, 2}$) are two
points which can be connected via a spacelike geodesic, then the distance from
$Y_1$ to $Y_2$ is $\cosh d = - \eta (Y_1, Y_2) > 0$. The boundary $S =
\partial \mathcal{B}^{3, 1}$ is a timelike hypersurface of dimension three of
constant distance from the point $t = 0$, $r = 0$ and it is umbilical hence
null geodesics intrinsic to $S$ are also null geodesic in ad$\mathbf{S}^{3,
1}$. It is the analog of de Sitter sphere which is of constant distance to the
origin in Minkowski spacetime. It is a tedious task to check that $Q$ has no
component normal to $S$. We state here the spacetime Heintz-Karcher inequality
and leave the spacetime Alexandrov theorem to the reader.

\begin{theorem}
  \label{spacetime heintz-karcher in ads}(spacetime Heintz-Karcher inequality
  in $\mathcal{B}^{3, 1}$) If there exists a flow $\varphi$ of a hypersurface
  $\Sigma$ with $\langle \vec{H}, \underline{L} \rangle > 0$ for some null
  vector field $\underline{L}$ and a free boundary on $S$ which flows $\Sigma$
  into the half geodesic ball of the slice $\{t = 0\}$, then we have the
  inequality
  \begin{equation}
    \int_{\Sigma} \frac{\langle \xi, \underline{L} \rangle}{\langle \vec{H},
    \underline{L} \rangle} \mathrm{d} \mu \geqslant \frac{n}{2 (n - 1)}
    \int_{\Sigma} Q (L, \underline{L}) \mathrm{d} \mu . \label{spacetime
    heintz karcher formula in ads}
  \end{equation}
  Equality occurs if and only if $\Sigma$ lies in a shear free null
  hypersurface.
\end{theorem}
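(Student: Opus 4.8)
The plan is to reproduce, in the anti-de Sitter setting, the three ingredients behind Theorem \ref{spacetime heintz-karcher}: a Minkowski-type integral identity for $Q$, the monotonicity of a functional of the form \eqref{functional} under the incoming null flow, and a reduction on the totally geodesic slice $\{t = 0\}$ to a known free boundary Heintz-Karcher inequality, this time in a geodesic ball of hyperbolic space. As algebraic input, Section \ref{anti de sitter basics} gives $\operatorname{div} \ast (\mathrm{d} y^2 \wedge \mathrm{d} y^3) = 0$ and $\operatorname{div} (\mathrm{d} y^1 \wedge \mathrm{d} y^4) = 3 (y^1 \mathrm{d} y^4 - y^4 \mathrm{d} y^1)$, so $Q$ is conformal Killing-Yano with associated $1$-form $\xi = \tfrac{1}{n} \operatorname{div} Q = y^1 \mathrm{d} y^4 - y^4 \mathrm{d} y^1$, which is Killing. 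Granting that $Q$ has no component normal to the support hypersurface $S = \partial \mathcal{B}^{3, 1}$, the proof of Theorem \ref{minkowski formula} applies verbatim and yields, for any null normal $\underline{L}$ of a spacelike $\Sigma$ meeting $S$ orthogonally,
\begin{equation}
  \int_{\Sigma} \big[ (n - 1) \langle \xi, \underline{L} \rangle + Q (\vec{H}, \underline{L}) + Q (\partial_a, (D^a \underline{L})^{\bot}) \big] \mathrm{d} \mu = 0 .
\end{equation}

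Next I would set up the monotone functional (the analog of $\mathcal{F}$ in \eqref{functional}, with the normalization forced by the ad$\mathbf{S}^{3, 1}$ structure) and run the incoming null flow $\tfrac{\partial F}{\partial s} = \varphi \underline{L}$ with $\varphi > 0$ chosen so that the free boundary condition $\Sigma \bot S$ is preserved and the flow persists until $\Sigma$ reaches the slice $\{t = 0\}$. The computation behind Theorem 3.2 of \cite{wang-minkowski-2017} is local and uses only that $Q$ is conformal Killing-Yano together with the vanishing of its normal component along $S$; hence the functional is nonincreasing along the flow, and is constant exactly when $\Sigma$ lies in a shear free null hypersurface. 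Thus its value at $\Sigma$ dominates its value at $\Sigma_T \subset \{t = 0\}$.

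It then remains to prove the functional is nonnegative on $\Sigma_T$, which is where the constant $\tfrac{n}{2 (n - 1)}$ in \eqref{spacetime heintz karcher formula in ads} enters. The metric induced on $\{t = 0\}$ is the Poincar{\'e} ball metric, so $\Sigma_T$ is a hypersurface with free boundary in a half geodesic ball of $\mathbb{H}^n$ and $S \cap \{t = 0\}$ is a geodesic sphere. Writing $\underline{L} = e_0 - \nu$ and $L = e_0 + \nu$, with $e_0$ the future unit normal of the slice and $\nu$ the outward unit normal of $\Sigma_T$ in it, I would evaluate at $t = 0$: $\langle \vec{H}, \underline{L} \rangle = H$, the mean curvature in $\mathbb{H}^n$; $\langle \xi, \underline{L} \rangle$ proportional to the static potential $y^1|_{t = 0}$, which satisfies $\nabla^2 (y^1|_{t = 0}) = y^1|_{t = 0}\, g$; and $Q (L, \underline{L})$ equal to twice the flux through $\nu$ of the conformal Killing field on $\mathbb{H}^n$ obtained by restricting $Q$ to the slice (the $\mathrm{d} y^1 \wedge \mathrm{d} y^4$ term supplying the position-type part, the $l \ast (\mathrm{d} y^2 \wedge \mathrm{d} y^3)$ term a rotation-type part adapted to the geodesic sphere). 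With these identifications the required nonnegativity is precisely the free boundary hyperbolic Heintz-Karcher inequality in the half geodesic ball, the hyperbolic counterpart of \eqref{space heintz-karcher}, which follows along the lines of Wang-Xia \cite{wang-uniqueness-2019}; feeding it back through the monotonicity gives \eqref{spacetime heintz karcher formula in ads}. Equality forces $\Sigma_T$ to be a totally umbilical geodesic spherical cap meeting $S$ orthogonally, i.e. a slice of a shear free null hypersurface with free boundary, and the rigidity in the monotonicity step propagates this to $\Sigma$.

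I expect the main obstacle to be twofold. The first is the verification, flagged in the text as tedious, that $Q$ has no component normal to the constant-distance timelike hypersurface $S$: one must express the unit normal of $S$ in the $y$-coordinates and check that $\nu_S \lrcorner\, Q$ annihilates $T S$, and it is here that the particular combination $\mathrm{d} y^1 \wedge \mathrm{d} y^4 + l \ast (\mathrm{d} y^2 \wedge \mathrm{d} y^3)$ together with the normalization $\cosh d = l$ is forced. The second is pinning down the constant: whereas the flat spacetime inequality of Theorem \ref{spacetime heintz-karcher} carries the factor $\tfrac{1}{2 (n - 1)}$, the curvature terms $\nabla_i \mathrm{d} y^\mu = y^\mu \theta^i$ that enter the reduction on the slice — equivalently, the hyperbolic rather than Euclidean form of the Minkowski formula — shift it to $\tfrac{n}{2 (n - 1)}$; carefully tracking these terms, and the signs that distinguish the ad$\mathbf{S}^{3, 1}$ and $\mathbf{S}^{3, 1}$ normalizations, is where the care lies.
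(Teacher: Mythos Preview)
Your outline is essentially the paper's own proof: invoke the monotonicity of the same functional \eqref{functional} (defined once for all three model spacetimes, not renormalized for ad$\mathbf{S}^{3,1}$) under the null flow with free boundary on $S$, then reduce on the slice $\{t=0\}$ by computing $\langle\xi,\underline{L}\rangle=y^1$ and $Q(L,\underline{L})=2\langle X_{\partial_1},\nu\rangle$ for the Wang--Xia conformal Killing field $X_a$ in the Poincar\'e ball, and appeal to \cite{wang-uniqueness-2019}. The paper carries out exactly this explicit slice computation and nothing more.
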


\begin{proof}
  The proof is the same with Theorem \ref{spacetime heintz-karcher}. We only
  have to verify when $t = 0$ the inequality holds. Let $\nu$ be the unit
  normal of $\Sigma$ in the $\{t = 0\}$ slice. Indeed, when $t = 0$, $\xi =
  y^1 \mathrm{d} y^4$ and
  \[ \underline{L} = e_0 - \nu = \tfrac{1 - r^2}{1 + r^2} \partial_t - \nu, \]
  so
  \[ \langle \xi, \underline{L} \rangle = y^1 = \tfrac{2 x^1}{1 - r^2} . \]
  We turn to $Q (L, \underline{L})$. We have
  \[ (\mathrm{d} y^1 \wedge \mathrm{d} y^4) (L, \underline{L}) = 2 \mathrm{d}
     y^1 (\nu) \]
  and
  \[ (\mathrm{d} y^1)^{\sharp} = \tfrac{1}{2} \partial_1 + (x^1 x^j \partial_j
     - \tfrac{1}{2} r^2 \partial_1) . \]
  And
  \[ \ast (\mathrm{d} y^2 \wedge \mathrm{d} y^3) = - \theta^1 \wedge \theta^0
     + y^2 (x^1 \theta^2 - x^2 \theta^1) \wedge \theta^0 + y^3 (x^1 \theta^3 -
     x^3 \theta^1) \wedge \theta^0, \label{ads correction term} \]
  so the 1-form $(\ast (\mathrm{d} y^2 \wedge \mathrm{d} y^3)) (\cdot, e_0)$
  is dual to $- \tfrac{1}{2} \partial_1 + (x^1 x^j \partial_j - \tfrac{1}{2}
  r^2 \partial_1)$. As usual, $Q (L, \underline{L}) = 2 Q (\nu, e_0)$. Thus,
  \begin{equation}
    Q (L, \underline{L}) = 2 \langle X_{\partial_1}, \nu \rangle,
  \end{equation}
  where $X_a = (1 + l) \left[ x^k a_k x^j \partial_j - \tfrac{1}{2} (r^2 +
  \tfrac{l - 1}{l + 1}) a \right]$ with $a = a^j \partial_j$ being a constant
  vector in $\mathbb{R}^n$. Letting $l = \tfrac{1 + R_{\mathbb{R}}^2}{1 -
  R_{\mathbb{R}}^2}$, {\eqref{spacetime heintz karcher formula in ads}}
  reduces to also {\cite{wang-uniqueness-2019}}.
\end{proof}

\begin{remark}
  It is easy to check that the higher dimensional analog of $\ast (\mathrm{d}
  y^2 \wedge \mathrm{d} y^3)$ in the $n$-dimensional anti-de Sitter spacetime
  \[ \ensuremath{\operatorname{ad}} \mathbf{S}^n = \{- (y^0)^2 + (y^1)^2 +
     \cdots + (y^n)^2 - (y^{n + 1})^2 = 1\} \]
  is
  \begin{equation}
    - e^1 \wedge e^0 + \sum_{i \neq 1}^n y^i (x^1 e^i - x^i e^1) \wedge e^0 .
  \end{equation}
\end{remark}

\subsection{de Sitter case}

We calculate below the quantities needed for a theorem parallel to Theorem
\ref{spacetime heintz karcher formula in ads}. We follow similar notations and
omit the the statements or details. Generalizing to higher dimension is also
straightforward. The conformal Killing-Yano 2-form is
\[ Q = \mathrm{d} y^4 \wedge \mathrm{d} y^1 + l \ast (\mathrm{d} y^3 \wedge
   \mathrm{d} y^2) \]
and its associated 1-form
\[ \xi =\ensuremath{\operatorname{div}}Q = 3 y^1 \mathrm{d} y^4 - y^4
   \mathrm{d} y^1 . \]
Notice the order of the superscripts. Within the slice $\{t = 0\}$, we have
that $\xi = y^1 \mathrm{d} y^4$ and $\underline{L} = e_0 - \nu = \tfrac{1 +
r^2}{1 - r^2} \partial_t - \nu$ and
\[ \langle \xi, \underline{L} \rangle = \tfrac{2 x^1}{1 + r^2} . \]
We turn to $Q (L, \underline{L})$. We have
\[ (\mathrm{d} y^4 \wedge \mathrm{d} y^1) (L, \underline{L}) = - 2 \mathrm{d}
   y^1 (\nu) . \]
Note that
\[ A := - (\mathrm{d} y^1)^{\sharp} = - \tfrac{1}{2} \partial_1 -
   (\tfrac{1}{2} r^2 \partial_1 - x^1 x^j \partial_j) \]
and
\[ \ast (\mathrm{d} y^3 \wedge \mathrm{d} y^2) = \theta^1 \wedge \theta^0 +
   y^2 (x^1 \theta^2 - x^2 \theta^1) \wedge \theta^0 + y^3 (x^1 \theta^3 - x^3
   \theta^1) \wedge \theta^0, \label{ds correction term} \]
so the 1-form $\ast (\mathrm{d} y^3 \wedge \mathrm{d} y^2) (\cdot, e_0)$ is
dual to
\[ B := \tfrac{1}{2} \partial_1 - \tfrac{1}{2} r^2 \partial_1 + x^1 x^j
   \partial_j . \]
$A + l B$ is then
\begin{equation}
  X_{\partial_1} : = (1 + l) \left[ x^1 x^j \partial_j + \tfrac{1}{2}
  (\tfrac{1 - l}{l + 1} - r^2) \partial_1 \right] .
\end{equation}
Therefore $Q (L, \underline{L}) = 2 \langle X_{\partial_1}, \nu \rangle$.
Setting $\tfrac{1 - l}{1 + l} = |x|^2$ with $0 < l < 1$ recovers the form of
{\cite{wang-uniqueness-2019}}. We have not given the support hypersurface of
the boundary yet. To this end, we fix a point $O = \{t = 0, r = 0\}$, let $S$
be the hypersurface in $\mathbf{S}^{3, 1}$ be a hypersurface of constant
distance $d$ from the point $O$ where $\cos d = l$. It is fairly easy to check
that $Q$ has no components to the hypersurface $S$.

\

\

\

\

\end{document}